\definecolor{citecolour}{rgb}{0.0, 0.0, 0.8}
\colorlet{linkcolour}{green!50!black}
\numberwithin{equation}{section}
\theoremstyle{plain}
\newtheorem*{theorem*}{Theorem}
\newtheorem{theorem}{Theorem}[section]
\newtheorem{lemma}[theorem]{Lemma}
\newtheorem{corollary}[theorem]{Corollary}
\newtheorem*{theoremA}{Theorem A}
\newtheorem*{theoremB}{Theorem B}
\theoremstyle{definition}
\newtheorem{remark}[theorem]{Remark}
\newtheorem{example}[theorem]{Example}
\newtheorem*{ack}{Acknowledgements}
\theoremstyle:=definition,remark,plain\do{%
        \expandafter\g@addto@macro\csname th@\theoremstyle\endcsname{%
            \addtolength\thm@preskip\parskip
            }%
        }
\DeclareMathOperator{\F}{F}
\DeclareMathOperator{\Z}{Z}
\DeclareMathOperator{\Soc}{Soc}
\DeclareMathOperator{\Aut}{Aut}
\DeclareMathOperator{\Lay}{E}
\DeclareMathOperator{\Fast}{F^{\ast}}
\DeclareMathOperator{\Fprime}{F^{\prime}}
\DeclareMathOperator{\nc}{\textbf{nC}}
\DeclareMathOperator{\B}{\mathfrak{B}}
\DeclareMathOperator{\FF}{\mathfrak{F}}
\renewcommand{\leq}{\leqslant}
\renewcommand{\geq}{\geqslant}
\begin{document}
\title{On the Frattini subgroup of a finite group}
\author{Stefanos Aivazidis}
\address{Departament d' \'Algebra, Universitat de Val\`encia, C/ Dr. Moliner, 50 46100-Burjassot (Val\`encia), Spain}
\email{s.aivazidis@qmul.ac.uk}
\author{Adolfo Ballester-Bolinches}
\address{Departament d' \'Algebra, Universitat de Val\`encia, C/ Dr. Moliner, 50 46100-Burjassot (Val\`encia), Spain}
\email{adolfo.ballester@uv.es}
\thanks{The first author has been supported by an LMS 150th Anniversary Postdoctoral Mobility Grant. The second author has been supported by the grant MTM2014-54707-C3-1-P from the Ministerio de Econom{\'\i}a y Competitividad, Spain, and FEDER, European Union. He has also been supported by a project from the National Natural Science Foundation of China (NSFC,
No.~11271085) and a project of Natural Science Foundation of Guangdong  Province (No.~2015A030313791).}
\keywords{Frattini subgroup, Formations of groups}
\subjclass[2010]{20D25, 20D10}
\begin{abstract}
We study the class of finite groups $G$ satisfying $\Phi (G/N)= \Phi(G)N/N$ for all normal subgroups $N$ of $G$. As a consequence of our main results we extend and amplify a theorem of Doerk concerning this class from the soluble universe to all finite groups and answer
in the affirmative a long-standing question of Christensen whether the class of finite groups which possess complements for each of their normal subgroups is subnormally closed.
\end{abstract}
\maketitle

\section{Introduction and statement of results}
The only groups considered in this paper are finite. In the present article we shall examine certain questions concerning the behaviour of the Frattini subgroup in epimorphic images.

Following Gasch{\"u}tz~\cite{gasch}, we call a group $\Phi$-free if its Frattini subgroup is trivial. We denote by $\B$ the class of groups $G$ such that $G/N$ is $\Phi$-free for all normal subgroups $N$ of $G$. It is clear that a group $G \in \B$ if and only if $G$ has no Frattini chief factors (recall that a chief factor $K/L$ of a group $G$ is said to be Frattini if $K/L \leq \Phi(G/L)$).

Our first main result is a reduction theorem of wide applicability that provides a sufficient condition for a $\Phi$-free group to belong to $\B$.

\begin{theoremA}
Suppose that $G$ is a $\Phi$-free group. If $G/N$ is $\Phi$-free for all normal subgroups $N$ of $G$ containing the generalised Fitting subgroup $\Fast(G)$ then $G$ belongs to $\B$.
\end{theoremA}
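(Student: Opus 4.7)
The plan is to argue by contradiction: assume $G$ satisfies the hypotheses but is not in $\B$, and choose a normal subgroup $N$ of $G$ of minimal order with $\Phi(G/N) \neq 1$. I first reduce to the case that $N$ is a minimal normal subgroup of $G$: if $1 \neq K \lneq N$ were normal in $G$ then by minimality $\Phi(G/K) = 1$, and every chief factor of $G/N$, being a chief factor of $G/K$ lying above $N/K$, would be non-Frattini, forcing $\Phi(G/N) = 1$. Also $N \neq 1$ (since $\Phi(G) = 1$), the hypothesis excludes $N \geq \Fast(G)$, and $N \cap \Fast(G) = 1$ would force $N \leq C_G(\Fast(G)) \leq \Fast(G)$, again impossible. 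Hence $N$ is a minimal normal subgroup of $G$ with $N \lneq \Fast(G)$.

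From $\Phi(G) = 1$ I invoke the standard structural facts that $F(G)$ is elementary abelian (since $\Phi(F(G)) \leq \Phi(G)$) and $\Lay(G)$ has trivial centre, so is a direct product of non-abelian simple groups, with $[F(G), \Lay(G)] = 1$, giving $\Fast(G) = F(G) \times \Lay(G)$. Consequently $N$ lies wholly inside one of the two factors. Applying the hypothesis at $\Fast(G)$ yields $\Phi(G/\Fast(G)) = 1$, and factoring through $G/N \twoheadrightarrow G/\Fast(G)$ gives $\Phi(G/N) \leq \Fast(G)/N$. Now $\Fast(G)/N$ splits as a direct product of $F(G)N/N$ (elementary abelian) and $\Lay(G)N/N$ (with trivial Fitting subgroup), so the normal nilpotent subgroup $\Phi(G/N)$ projects trivially onto the second factor, whence $\Phi(G/N) \leq F(G)N/N$.

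Assuming $\Phi(G/N) \neq 1$, pick a minimal normal subgroup $\bar A$ of $G/N$ inside $\Phi(G/N)$. Then $\bar A$ is abelian, sits inside $F(G)N/N$, and by semisimplicity of $F(G)$ as a $G$-module lifts to a simple $G$-submodule $A'$ of $F(G)$ with $A' \cap N = 1$ and $\bar A = A'N/N$. It suffices to produce a complement of $A'$ in $G$ containing $N$, for then its image in $G/N$ would complement $\bar A$, contradicting $\bar A \leq \Phi(G/N)$. When $N$ is non-abelian, $N$ is perfect and centralises $A'$, so $A'N = A' \times N$ and $\mathrm{Hom}(N,A') = 0$ shows $N$ is the unique complement of $A'$ in $A'N$; the modular identity then forces every complement $K$ of $A'$ in $G$ to satisfy $K \cap A'N = N$, so $N \leq K$. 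When $N$ is abelian, semisimplicity of $F(G)$ affords a $G$-module decomposition $F(G) = A' \oplus N \oplus D$; since $F(G)$ is nilpotent normal with $F(G) \cap \Phi(G) = 1$, Gasch\"utz's splitting theorem gives a complement $H_0$ of $F(G)$ in $G$, and then $K := (N \oplus D)H_0$ is a complement of $A'$ in $G$ containing $N$.

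The main obstacle is producing a complement of $A'$ that contains $N$: the abelian and non-abelian subcases require genuinely different arguments, via uniqueness of complements in $A'N$ (using perfectness of $N$) in the non-abelian case, and an explicit module decomposition together with Gasch\"utz's theorem in the abelian case. Getting to this complementation step requires the careful reduction showing $\Phi(G/N) \leq F(G)N/N$, which exploits the direct-product structure of $\Fast(G)$ and the semisimplicity of $F(G)$ as a $G$-module.
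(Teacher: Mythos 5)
Your reduction to the case that $N$ is a minimal normal subgroup of $G$ contains a genuine gap, and it occurs at exactly the point where the hypothesis on quotients over $\Fast(G)$ has to re-enter the argument. Having chosen $N$ of minimal order with $\Phi(G/N)\neq 1$ and taken $1\neq K\lneq N$ normal in $G$, you claim that $\Phi(G/K)=1$ forces every chief factor of $G/K$ lying above $N/K$ to be non-Frattini, hence $\Phi(G/N)=1$. But $\Phi(G/K)=1$ only says that the \emph{minimal} normal subgroups of $G/K$ are supplemented; it gives no information about chief factors higher up, and in particular none about $\Phi\bigl((G/K)/(N/K)\bigr)=\Phi(G/N)$. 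The paper's own example in Section~\ref{good} refutes the implication you invoke: for $G=E\rtimes C$ with $E$ elementary abelian of order $25$ and $C$ cyclic of order $4$ acting by squaring, every order-$5$ subgroup $M$ of $E$ is normal with $\Phi(G/M)=1$, yet $\Phi(G/E)\neq 1$; so a normal subgroup of minimal order with non-trivial Frattini quotient need not be minimal normal. (That group does not satisfy the hypotheses of Theorem~A, but your justification of the reduction never uses those hypotheses, which is precisely the symptom of the gap.) The repair is the one the paper uses: run the whole proof as an induction on $|G|$, verify that $G/K$ again satisfies the hypotheses of the theorem --- $\Phi(G/K)=1$ by minimality of $N$, and if $\Fast(G/K)\leq M/K$ then $\Fast(G)\leq M$ because $\Fast(G)K/K\leq\Fast(G/K)$, so $\Phi(G/M)=1$ by hypothesis --- and conclude from the inductive hypothesis that $G/K\in\B$, whence $\Phi(G/N)=1$.

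Once $N$ is minimal normal, the rest of your argument is sound and is a genuine variant of the paper's proof: where the paper sets $U/N=\Phi(G/N)$, shows $U\leq\Fast(G)$ and complements $U$ (respectively $\F(U)$) in $G$ via Lemma~\ref{collection}~\ref{satz7}, you instead push $\Phi(G/N)$ into $\F(G)N/N$ by projecting onto the layer, pick a minimal normal subgroup $\bar A=A'N/N$ of $G/N$ inside it, and produce a complement of the lift $A'$ containing $N$ (uniqueness of the complement of $A'$ in $A'\times N$ via $\mathrm{Hom}(N,A')=0$ in the non-abelian case; a module decomposition of the completely reducible $\F(G)$ together with Gasch\"utz splitting in the abelian case). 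Both routes rest on the same two ingredients --- the decomposition $\Fast(G)=\F(G)\times\Lay(G)$ when $\Phi(G)=1$ and the complementation of abelian normal subgroups meeting $\Phi(G)$ trivially --- so this part is a legitimate alternative; only the reduction needs to be fixed.
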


Theorem A is important in the study of groups that behave 
like nilpotent groups with respect to the Frattini subgroup, that is, $\Phi(G/N) = \Phi(G)N/N$ for all $N \mathrel{\lhd} G$. In one of his last papers, Doerk~\cite{doerk} examined the soluble case and proved that the class $\mathfrak{F}_{\text{sol}}$ of soluble groups $G$ where 
$\Phi (G/N)= \Phi(G)N/N$ for every normal subgroup $N$ of $G$ is a saturated formation, that is, a class of groups which is closed under epimorphic images, subdirect products and Frattini extensions.  Further, he obtained several equivalent conditions for a soluble group to be in $\mathfrak{F}_{\text{sol}}$.

\begin{theorem}[{\cite[Satz 2']{doerk}}]\label{satz2'}
Let $G$ be a soluble group. Then the following statements are pairwise equivalent:
\begin{enumerate}[label={\upshape(\arabic*)}]
\item $G \in \mathfrak{F}_{\text{sol}}$.
\item $G /\Phi(G)$ has no Frattini chief factors.
\item $G /\F(G)$ has no Frattini chief factors.
\item If $H/K$ is a chief factor of $G$ then $G /C_G(H/K)$ has no Frattini chief factors.
\end{enumerate}
\end{theorem}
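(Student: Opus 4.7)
The plan is to establish the four equivalences via the cycle $(1) \Rightarrow (2) \Rightarrow (3) \Leftrightarrow (4)$ together with the implication $(3) \Rightarrow (1)$, the last being the crucial step that invokes Theorem~A. The implications $(1) \Rightarrow (2)$ and $(2) \Rightarrow (3)$ are routine: for the first, if $H/K$ is a chief factor with $\Phi(G) \leq K$ then hypothesis (1) gives $\Phi(G/K) = \Phi(G)K/K = 1$, so $H/K \not\leq \Phi(G/K)$; for the second, since $\Phi(G) \leq \F(G)$, every chief factor above $\F(G)$ lies \emph{a fortiori} above $\Phi(G)$.

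The equivalence $(3) \Leftrightarrow (4)$ rests on the classical identity $\F(G) = \bigcap_{H/K} C_G(H/K)$ (intersection over chief factors) valid in the soluble universe. This produces a subdirect embedding $G/\F(G) \hookrightarrow \prod_{H/K} G/C_G(H/K)$, while each $G/C_G(H/K)$ is a quotient of $G/\F(G)$. Since the class $\B$ is trivially closed under quotients, $(3) \Rightarrow (4)$ is immediate. For the reverse, one uses that $\B$ is closed under subdirect products in the soluble universe---i.e., it is a saturated formation---so that having each factor $G/C_G(H/K) \in \B$ forces the subdirect product $G/\F(G)$ to lie in $\B$.

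The key step is $(3) \Rightarrow (1)$, and here Theorem~A does the work. I would apply it to the $\Phi$-free quotient $G/\Phi(G)$. Since $G$ is soluble, $\Fast(G/\Phi(G)) = \F(G/\Phi(G)) = \F(G)/\Phi(G)$, so the hypothesis of Theorem~A reduces to showing that $G/N$ is $\Phi$-free for every $N \lhd G$ with $N \geq \F(G)$---and this is exactly the content of (3), since $G/\F(G) \in \B$ implies all its quotients are $\Phi$-free. Theorem~A then yields $G/\Phi(G) \in \B$, equivalently $\Phi(G/N) = 1$ for every $N \geq \Phi(G)$. To extract (1) for an arbitrary $N \lhd G$: the subgroup $\Phi(G)N$ lies above $\Phi(G)$, hence $\Phi(G/\Phi(G)N) = 1$; combined with the isomorphism $(G/N)/(\Phi(G)N/N) \cong G/\Phi(G)N$, the standard fact that surjections push Frattini subgroups into Frattini subgroups gives $\Phi(G/N) \leq \Phi(G)N/N$, while the reverse inclusion $\Phi(G)N/N \leq \Phi(G/N)$ is also a direct consequence of this push-forward property.

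I expect the main obstacle to be the direction $(4) \Rightarrow (3)$: although the subdirect embedding is transparent, the closure of $\B$ under subdirect products in the soluble universe is a non-trivial classical fact which will either need a self-contained justification or a precise reference. Everything else---the formal manipulations around $\Phi$ and the application of Theorem~A---is straightforward once one accepts that $\B$ is a formation.
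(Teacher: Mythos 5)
Your proof is essentially correct, but note that the paper itself offers no proof of this statement: it is quoted verbatim from Doerk's paper (Satz 2'), and the authors instead derive the generalisation to arbitrary finite groups (Corollary~\ref{newdoerk}) from Theorems~A and~B. Your argument is, in effect, a soluble-universe specialisation of that derivation, and the key step is exactly the one the paper relies on: apply Theorem~A to $G/\Phi(G)$, using that for soluble $G$ one has $\Fast(G/\Phi(G)) = \F(G/\Phi(G)) = \F(G)/\Phi(G)$ (Gasch\"utz), so that hypothesis (3) feeds directly into the hypothesis of Theorem~A; the passage from $G/\Phi(G) \in \B$ back to statement (1) via $\Phi(G)N \geq \Phi(G)$ and Lemma~\ref{collection}(iii) is also the intended mechanism. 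Two remarks. First, the dependency you flag in $(4) \Rightarrow (3)$ is genuine but is supplied by the paper: Theorem~B\ref{thmB1} proves that $\B$ is a formation (closed under subdirect products), even outside the soluble universe, and its proof does not depend on the present statement, so there is no circularity. Second, your parenthetical ``i.e., it is a saturated formation'' is wrong and should be deleted: closure under quotients and subdirect products makes $\B$ a formation, not a saturated one, and indeed $\B$ is emphatically \emph{not} saturated --- the paper proves it is a totally non-saturated formation (a cyclic group of order $p^2$ has $\Phi$-free quotient in $\B$ but does not itself lie in $\B$). The property you actually use is the correct one, so this is a terminological slip rather than a gap, but in a paper whose point is the contrast between $\B$ (tn-formation) and $\FF = E_\Phi\B$ (the smallest saturated formation containing $\B$) it is a slip worth correcting.
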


One well-known feature of a saturated formation $\FF$ is that in each group $G$, every chief factor of the form $G^{\FF}/K$ is supplemented in $G$, where $G^{\FF}$ is the $\FF$-residual of $G$, that is, the smallest normal subgroup of $G$ with quotient in $\FF$. \emph{Totally non-saturated formations} (or tn-formations for short) are studied in~\cite{doerkh},~\cite{locom} in the soluble universe, and in~\cite{prefrattini2},~\cite{bjuan} in the general case, as 
the formations $\FF$ such that, in each group $G$, every chief factor of the form $G^{\FF}/K$ is Frattini. 

Totally non-saturated formations are significant in many ways, one of which is the fact that every soluble group can be embedded in a  multiprimitive group (\cite{hawkes}), and these groups belong to every totally non-saturated formation of full characteristic.

Our second main result is a generalisation of Doerk's theorem, valid for all
groups. Its proof depends heavily on Theorem~A.

\begin{theoremB}
The following assertions are valid:
\begin{enumerate}[label={\upshape(\arabic*)}]
\item\label{thmB1} The class $\B$ is a subnormally closed tn-formation.

\item\label{thmB2} The class $\FF$ of all groups $G$ such that $\Phi (G/N)= \Phi(G)N/N$ for all $N \mathrel{\lhd} G$ satisfies $\mathfrak{F} = E_{\Phi}\B = \mathfrak{N}\B$, where $\mathfrak{N}$ is the class of all nilpotent groups. 

\item\label{thmB3} The class $\FF$ is the smallest saturated formation containing $\B$ and it is locally defined by the formation function $f$ given by $f(p) = \B$ for all primes $p$. Moreover, $\FF$ is closed under taking subnormal subgroups. 
\end{enumerate}
\end{theoremB}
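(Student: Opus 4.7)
The plan is as follows. Throughout we use the equivalence $G \in \B$ iff every chief factor of $G$ is non-Frattini (equivalently, complemented in its respective quotient). Epimorphic closure of $\B$ is immediate. For $R_0$-closure, assume $G/N_1, G/N_2 \in \B$ with $N_1 \cap N_2 = 1$ and let $H/K$ be a chief factor of $G$. In the generic case $H \cap N_i \leq K$ for some $i$, the quotient $HN_i/KN_i$ is a minimal normal subgroup of the $\B$-group $G/KN_i$, admits a complement $V/KN_i$, and a direct computation shows $VH = G$ and $V \cap H = K$, so $V/K$ complements $H/K$ in $G/K$. The residual symmetric case $H \leq KN_1 \cap KN_2$ is handled by induction on $|G|$ combined with Theorem~A: $\Phi(G) \leq N_1 \cap N_2 = 1$, and for $N \geq \Fast(G)$ one bounds $\Phi(G/N)$ by $(NN_1 \cap NN_2)/N$ and argues inductively that this forces $\Phi(G/N) = 1$. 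For the tn-formation property, assume by contradiction that some chief factor $G^{\B}/K$ is non-Frattini; then $G/K = (G^{\B}/K) \rtimes (M/K)$ with $M/K \cong G/G^{\B} \in \B$, and checking that every chief factor of $G/K$ is non-Frattini (the minimal normal $G^{\B}/K$ by hypothesis, and the others as chief factors of $M/K \in \B$) gives $G/K \in \B$, contradicting the minimality of $G^{\B}$. Subnormal closure reduces by induction to the normal-subgroup case, which is handled similarly.

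For \ref{thmB2}, $\FF \subseteq E_{\Phi}\B$ is immediate: for $N \geq \Phi(G)$, the defining identity yields $\Phi(G/N) = 1$, so every quotient of $G/\Phi(G)$ is $\Phi$-free. Also $E_{\Phi}\B \subseteq \mathfrak{N}\B$ holds because $\Phi(G)$ is nilpotent. The substantive step is $\mathfrak{N}\B \subseteq \FF$: given a nilpotent normal $N$ with $G/N \in \B$, we apply Theorem~A to the $\Phi$-free group $G/\Phi(G)$, observing that $\Phi(G)N/\Phi(G)$ is nilpotent and hence contained in $\Fast(G/\Phi(G))$, and that $G/\Phi(G)N$ is a quotient of $G/N \in \B$, so every quotient of $G/\Phi(G)$ by a normal subgroup above the generalised Fitting subgroup is $\Phi$-free. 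Hence $G/\Phi(G) \in \B$, i.e., $G \in E_{\Phi}\B$. With this in hand, the defining identity $\Phi(G/M) = \Phi(G)M/M$ for arbitrary $M \lhd G$ is verified by noting that $G/\Phi(G)M$ is a quotient of $G/\Phi(G) \in \B$, hence $\Phi$-free, which combined with the general inequality $\Phi(G)M/M \leq \Phi(G/M)$ gives the required equality.

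For \ref{thmB3}, $\FF = \mathfrak{N}\B$ is a formation by the standard argument that $\mathfrak{X}\mathfrak{Y}$ is a formation whenever $\mathfrak{X}$ is also normal-subgroup closed---true for $\mathfrak{X} = \mathfrak{N}$. Saturation follows from the chain $G \in \FF \iff G/\Phi(G) \in \B \iff G/\Phi(G) \in \FF$ using $\Phi(G/\Phi(G)) = 1$; the same chain exhibits $\FF$ as the smallest saturated formation containing $\B$, since any saturated $\mathfrak{G} \supseteq \B$ must contain every $G$ with $G/\Phi(G) \in \B \subseteq \mathfrak{G}$. The local description $f(p) = \B$ is extracted from the Lubeseder--Schmid framework applied to this generated saturated formation, with the key input being that $\Phi(G) \leq C_G(H/K)$ for every chief factor, so $G/C_G(H/K)$ is always a quotient of $G/\Phi(G) \in \B$. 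Finally, subnormal closure of $\FF$ is inherited: if $H$ is subnormal in $G \in \mathfrak{N}\B$ with witness nilpotent $N \lhd G$, then $H \cap N$ is nilpotent and $H/(H \cap N) \cong HN/N$ is subnormal in $G/N \in \B$, hence in $\B$ by \ref{thmB1}, so $H \in \mathfrak{N}\B$.

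The principal obstacle is the $R_0$-closure step in \ref{thmB1}: the direct chief-factor argument handles a generic case cleanly, but the symmetric configuration $H \leq KN_1 \cap KN_2$ (which genuinely occurs, for instance in $C_2^3$ with $N_i$ distinct coordinate axes and $K$ a diagonal line) forces a more delicate argument, and invoking Theorem~A together with induction on $|G|$ seems necessary. The other technical point is the application of Theorem~A in \ref{thmB2}, where identifying the right normal subgroup of $G/\Phi(G)$ above $\Fast(G/\Phi(G))$ is precisely what elevates $\Phi$-freeness of $G/\Phi(G)$ to membership in $\B$.
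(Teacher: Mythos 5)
Your parts (2) and (3) are essentially sound and in places cleaner than the paper's own treatment (applying Theorem~A directly to $G/\Phi(G)$ for $\mathfrak{N}\B \subseteq \FF$, and the hands-on verification that $\FF$ is subnormally closed, both work). The difficulties are concentrated in part (1). The first is your opening equivalence ``non-Frattini iff complemented in its respective quotient'', which is false for non-abelian chief factors: the paper's own example $\Aut(A_6)$ is a $\B$-group whose non-Frattini minimal normal subgroup $A_6$ has no complement; a non-Frattini chief factor is merely \emph{supplemented} by a maximal subgroup. This is harmless in your generic $R_0$ case (a maximal subgroup $V$ of $G$ with $KN_i \leq V$ and $HN_i \not\leq V$ already witnesses $H/K \not\leq \Phi(G/K)$), but it invalidates the decomposition $G/K = (G^{\B}/K)\rtimes(M/K)$ in your tn-formation argument, which must be rerouted --- for instance through a chief series of $G/K$ passing through $G^{\B}/K$ together with the generalised Jordan--H\"older theorem for Frattini chief factors, or, as the paper does, through Theorem~A applied to $G/K$ after observing that $\Phi(G/K)=1$ forces $G^{\B}/K \leq \Soc(G/K)=\Fast(G/K)$.

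The more serious problem is the symmetric case of $R_0$-closure, which is a genuine gap as written. The bound $\Phi(G/N) \leq (NN_1\cap NN_2)/N$ for $N \geq \Fast(G)$ is correct, but it yields $\Phi(G/N)=1$ only when $NN_1 \cap NN_2 = N$, and ``argues inductively'' does not explain how to treat $NN_1 \cap NN_2 > N$: the normal subgroups $NN_1/N$ and $NN_2/N$ of the smaller group $G/N$ then no longer intersect trivially, so your inductive hypothesis does not apply, and its natural strengthening only gives $G/(NN_1\cap NN_2)\in\B$, which does not force $\Phi(G/N)=1$. The idea you are missing, and which the paper supplies, is a reduction inside a minimal counterexample to the case where $N_1$ and $N_2$ are \emph{minimal normal} subgroups of $G$ (chosen minimal subject to $G/N_i\in\B$); then $\Phi(G)=1$ gives $N_1 \leq \Soc(G)=\Fast(G)\leq N$, so $NN_1=N$, the bound collapses, and Theorem~A finishes. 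Alternatively, your symmetric case can be avoided altogether: by the generalised Jordan--H\"older theorem it suffices to check the factors of a single chief series, and a series passing through $N_1$ has every factor $H/K$ either with $H\leq N_1$ --- where $H\cap N_2 \leq N_1\cap N_2=1\leq K$, your generic case --- or with $N_1\leq K$, where $G/N_1\in\B$ applies directly. Either repair is needed before part (1) can be considered proved.
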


\begin{corollary}\label{newdoerk}
Let $G$ be a group. Then the following statements are pairwise equivalent:
\begin{enumerate}[label={\upshape(\arabic*)}]
\item\label{newdoerk1} If $N \mathrel{\lhd} G$ then $\Phi (G/N) = \Phi(G)N/N$.
\item\label{newdoerk2} $G /\Phi(G)$ has no Frattini chief factors.
\item\label{newdoerk3} $G/\F(G)$ has no Frattini chief factors. 
\item\label{newdoerk4} $G /\Fprime(G)$ has no Frattini chief factors.
\item\label{newdoerk5} If $H/K$ is a chief factor of $G$, then $G /C_G(H/K)$ has no Frattini chief factors.
\end{enumerate}
\end{corollary}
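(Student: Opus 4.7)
The plan is to use Theorem~B as the main engine, since~\ref{newdoerk1} is precisely the assertion $G\in\FF$. Each remaining condition will correspond to one of the three descriptions of $\FF$ furnished by Theorem~B, translated into the language of chief factors.

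For~\ref{newdoerk1}~$\Leftrightarrow$~\ref{newdoerk3} I would invoke Theorem~B\ref{thmB2}: the equality $\FF=\mathfrak{N}\B$ combined with the fact that $\B$ is a formation implies that $G\in\FF$ if and only if the $\B$-residual $G^{\B}$ is nilpotent, i.e.\ $G^{\B}\le\F(G)$, which is in turn equivalent to $G/\F(G)\in\B$. For~\ref{newdoerk1}~$\Leftrightarrow$~\ref{newdoerk2} I would use saturation (Theorem~B\ref{thmB3}): $G\in\FF$ iff $G/\Phi(G)\in\FF$, and on a $\Phi$-free group $H$ the identity $\Phi(H/N)=\Phi(H)N/N$ forces $\Phi(H/N)=1$ for every $N\trianglelefteq H$, so $\FF$ and $\B$ coincide on $\Phi$-free groups; applied to $H=G/\Phi(G)$ this gives~\ref{newdoerk2}. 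Finally,~\ref{newdoerk1}~$\Leftrightarrow$~\ref{newdoerk5} is a direct application of the local characterisation of saturated formations: by Theorem~B\ref{thmB3}, $\FF$ is locally defined by the formation function $f(p)=\B$ for every prime~$p$, so $G\in\FF$ precisely when $G/C_G(H/K)\in\B$ for every chief factor $H/K$ of~$G$ and every prime dividing $|H/K|$, which is exactly~\ref{newdoerk5}.

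The remaining condition~\ref{newdoerk4} I would interpose between~\ref{newdoerk3} and~\ref{newdoerk5} in the circular chain. One direction follows immediately from the closure of $\B$ under quotients once the appropriate containment involving $\Fprime(G)$ is invoked; the reverse direction should follow by exploiting a self-centralising property of $\Fprime(G)$ to argue that the centralisers $C_G(H/K)$ featuring in~\ref{newdoerk5} contain $\Fprime(G)$, so that $\B$-membership of $G/\Fprime(G)$ propagates to each such quotient via the fact that $\B$ is subnormally closed and a formation (Theorem~B\ref{thmB1}). The main obstacle I anticipate is precisely this step: matching the single condition $G/\Fprime(G)\in\B$ with the family of conditions indexed by all chief factors requires leaning on structural properties of $\Fprime(G)$ that go beyond the formal apparatus of Theorem~B, and the book-keeping of chief factors above versus inside $\Fprime(G)$ must be handled carefully. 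Everything else is a direct translation of Theorem~B into the chief-factor language of the corollary.
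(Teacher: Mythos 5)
Your reductions of \ref{newdoerk2}, \ref{newdoerk3} and \ref{newdoerk5} to membership in $\FF$ are correct and are surely the intended use of Theorem~B: saturation of $\FF$ together with the coincidence of $\FF$ and $\B$ on $\Phi$-free groups gives \ref{newdoerk1}~$\Leftrightarrow$~\ref{newdoerk2}; the factorisation $\FF=\mathfrak{N}\B$ and the fact that $\B$ is a formation give \ref{newdoerk1}~$\Leftrightarrow$~\ref{newdoerk3} (via $G^{\B}\leq\F(G)$); and the local definition $f(p)=\B$ for all $p$ gives \ref{newdoerk1}~$\Leftrightarrow$~\ref{newdoerk5} directly, since with a constant formation function a chief factor $H/K$ is $f$-central precisely when $G/C_G(H/K)\in\B$. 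The implication \ref{newdoerk3}~$\Rightarrow$~\ref{newdoerk4} is also fine, since $\F(G)\leq\Fprime(G)$ and $\B$ is quotient-closed.

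The step you flagged as the main obstacle is, however, genuinely broken as proposed. The claim that $\Fprime(G)\leq C_G(H/K)$ for every chief factor $H/K$ is false: if $H/K$ is a non-abelian chief factor with $H\leq\Fprime(G)$, then $H$ itself does not centralise $H/K$. Concretely, for $G=A_5$ one has $\Fprime(G)=G$ while $C_G(G/1)=1$; for $G=S_5$ one has $\Fprime(G)=A_5$ while $C_G(A_5/1)=1$. (The true statement underlying \ref{newdoerk3}~$\Rightarrow$~\ref{newdoerk5} is that $\F(G)$ centralises every chief factor of $G$; nothing of the sort holds for $\Fprime(G)$, and the ``self-centralising'' property you allude to points the other way, namely $C_G(\Fast(G))\leq\Fast(G)$.) So \ref{newdoerk4}~$\Rightarrow$~\ref{newdoerk5} cannot be obtained by quotient-closure alone. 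The missing idea is to route \ref{newdoerk4} back through Theorem~A instead: pass to $\overline{G}=G/\Phi(G)$ and observe that $\Fprime(G)/\Phi(G)=\Soc(\overline{G})=\Fast(\overline{G})$ by Lemma~\ref{aux}, so that \ref{newdoerk4} states exactly that $\overline{G}/\Fast(\overline{G})\in\B$. Hence every quotient of the $\Phi$-free group $\overline{G}$ by a normal subgroup containing $\Fast(\overline{G})$ is $\Phi$-free, and Theorem~A yields $\overline{G}\in\B$, which is \ref{newdoerk2}. Replacing your proposed \ref{newdoerk4}~$\Rightarrow$~\ref{newdoerk5} by this implication \ref{newdoerk4}~$\Rightarrow$~\ref{newdoerk2} closes the cycle, and the remainder of your argument stands.
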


Following Christensen~\cite{christensen1}, we say that $G$ is 
an $\nc$-group if every normal subgroup of $G$ is complemented. The class of all $\nc$-groups is a tn-formation which is contained in every tn-formation of full characteristic. In particular, every $\nc$-group is a $\B$-group. However, the containment of $\nc$ in $\B$ is proper. An example of a $\B$-group which has a normal subgroup with no complement is $\Aut(A_6)$.

\begin{theorem}\label{th-1}
Suppose that $G$ is a $\B$-group. Then $G$ is an $\nc$-group if and only if $G$ splits over each member of its generalised Fitting series.
\end{theorem}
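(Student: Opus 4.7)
The forward implication is immediate: in an $\nc$-group every normal subgroup, and in particular each (characteristic) member of the generalised Fitting series, has a complement.

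For the converse I would induct on the length $n$ of the generalised Fitting series of $G$, with vacuous base $n=0$. In the inductive step, set $F = \Fast(G)$ and use the splitting hypothesis to write $G = F \rtimes H$. Then $H \cong G/F$ is again a $\B$-group (since $\B$ is a formation by Theorem~B, hence closed under epimorphic images), the generalised Fitting series of $H$ is the tail of that of $G$ reduced modulo $F$---of length $n-1$---and $H$ splits over each of its terms (quotient the corresponding splittings of $G$ by $F$). The inductive hypothesis then yields $H \in \nc$.

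The core structural input is the following \emph{key lemma}: in every $\B$-group $G$, each $G$-invariant subgroup of $\Fast(G)$ admits a $G$-invariant complement in $\Fast(G)$. To prove it, I would first use the standard inequality $\Phi(N) \leq \Phi(G)$ (valid for normal, and by iteration subnormal, subgroups $N$) together with $\Phi(G) = 1$ to conclude that $\F(G)$ is elementary abelian and that each component $L$ satisfies $\Z(L) \leq \Phi(L) \leq \Phi(G) = 1$, so that $\Fast(G) = \F(G) \times \Lay(G)$ with $\Lay(G)$ a direct product of non-abelian simple groups. Goursat's lemma then forces any $G$-invariant subgroup $N$ of $\Fast(G)$ to decompose as $(N \cap \F(G)) \times (N \cap \Lay(G))$, since the abelianness of $\F(G)$ and the perfectness of $\Lay(G)$ rule out any non-trivial Goursat correspondence. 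The $\Lay(G)$-factor is a $G$-invariant subproduct of the simple factors of $\Lay(G)$ and so has an evident $G$-invariant complement; the $\F(G)$-factor is handled by showing that $\F(G)$ is semisimple as a $G$-module, for which it suffices to note that any minimal $G$-submodule $K \leq \F(G)$ has a group-theoretic complement $M$ in $G$ (by the $\B$-hypothesis), and that $\F(G) \cap M$ is then a $G$-invariant complement of $K$ in $\F(G)$ (since $K$ acts trivially on the abelian $\F(G)$, so $G$-invariance of $\F(G) \cap M$ reduces to $M$-invariance), whence a standard Jacobson-radical argument rules out any non-zero radical.

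Given any $N \lhd G$, the $\nc$-ness of $H$ now yields a complement $T/F$ of $NF/F$ in $G/F$, so $F \leq T$, $G = NT$, $N \cap T = N \cap F$ and $T = F \rtimes (T \cap H)$. Decompose $F = (N \cap F) \times F_0$ with $F_0 \lhd G$ via the key lemma, and set $C = F_0(T \cap H)$; Dedekind's identity then yields $NC = G$ and $N \cap C = 1$, completing the proof. The principal obstacle is the key lemma---specifically the vanishing of the centres of components of a $\B$-group and the semisimplicity of $\F(G)$, both of which rest crucially on the Frattini-freeness of $G$ and on the complementation of chief factors guaranteed by the $\B$-condition.
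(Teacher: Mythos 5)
Your proposal is correct and follows essentially the same route as the paper: the same induction on the generalised Fitting length, and the same construction of the complement of $N$ as a product of a $G$-invariant complement of $N\cap\Fast(G)$ inside $\Fast(G)$ with a complement pulled back from the $\nc$-quotient $G/\Fast(G)$ (your $F_0(T\cap H)$ is the paper's $VC$). Your ``key lemma'' is the paper's Lemma~\ref{aux} together with the semisimplicity of $\F(G)$ as a $G$-module, which the paper invokes via Gasch\"utz and you reprove. One small caveat: in the Goursat step, perfectness of $\Lay(G)$ alone does not exclude abelian sections of its \emph{subgroups}; you must also use normality of $N$ in $G$, which gives $[\pi_{\Lay(G)}(N),\Lay(G)]\leq N\cap\Lay(G)$, together with the triviality of the centres of the simple factors of $\Lay(G)$, to force the Goursat quotient to be trivial.
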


The following known result is a direct consequence of Theorem~\ref{th-1}.
\begin{corollary}[{\cite{bechtell-1}}]
Every soluble group is a $\B$-group if and only if it is an $\nc$-group.
\end{corollary}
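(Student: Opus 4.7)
The forward implication is immediate: every term of the generalised Fitting series is normal in $G$, so if $G$ is an $\nc$-group then it splits over each of them. The content lies in the converse, where my plan is induction on the generalised Fitting height $k$ of $G$, defined as the least integer with $\Fast_k(G)=G$. The structural input I want to extract from the $\B$-hypothesis is the identity $\Fast(G)=\Soc(G)$, valid because $G$ is $\Phi$-free: Gasch\"utz's theorem identifies $\F(G)/\Phi(G)$ with the abelian socle of $G/\Phi(G)$, so $\F(G)\leq \Soc(G)$ here, while the standard inclusion $\Z(\Lay(G))\leq \Phi(G)=1$ forces every component of $G$ to be simple. Consequently $\Fast(G)$ is a direct product of minimal normal subgroups of $G$, so every $L\mathrel{\lhd} G$ with $L\leq \Fast(G)$ admits a $G$-invariant complement $L^{*}$ in $\Fast(G)$. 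This disposes of the base case: when $k=1$, $G$ itself is such a direct product, and a semisimple group of this form is visibly an $\nc$-group.

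For the inductive step, pass to $\bar G=G/\Fast(G)$. By Theorem~B this is again a $\B$-group; its generalised Fitting series is the image of that of $G$, and the splitting hypothesis descends at once, so by induction $\bar G$ is an $\nc$-group. Given $N\mathrel{\lhd} G$, the induction hypothesis furnishes a complement $K/\Fast(G)$ to $N\Fast(G)/\Fast(G)$ in $\bar G$, with $\Fast(G)\leq K$. The modular law then yields $G=NK$ and $N\cap K=L$, where $L:=N\cap \Fast(G)$; hence it suffices to complement $L$ inside $K$, since any $C\leq K$ with $K=LC$ and $L\cap C=1$ also satisfies $NC\supseteq N\cdot LC=N\cdot K=G$ and $N\cap C\leq N\cap K\cap C=L\cap C=1$.

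To build such a $C$, fix the $G$-invariant decomposition $\Fast(G)=L\times L^{*}$ from the first step and a complement $H$ to $\Fast(G)$ in $G$ supplied by hypothesis. Since $\Fast(G)\leq K$ we obtain $K=\Fast(G)(K\cap H)$ with $\Fast(G)\cap(K\cap H)=1$, whence $|K|=|\Fast(G)|\,|K\cap H|$. I set $C:=L^{*}(K\cap H)$; its order is $|L^{*}|\,|K\cap H|=|K|/|L|$, and any $x\in L\cap C$, written $x=yh$ with $y\in L^{*}$ and $h\in K\cap H$, yields $h=y^{-1}x\in L^{*}L\cap H=\Fast(G)\cap H=1$ and so $x\in L\cap L^{*}=1$. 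Thus $C$ complements $L$ in $K$ and, by the previous paragraph, also complements $N$ in $G$, closing the induction. The principal obstacle I anticipate is securing the identity $\Fast(G)=\Soc(G)$ in the $\Phi$-free regime---without it the $G$-invariant decomposition of $\Fast(G)$ is unavailable---but this rests on standard facts about the generalised Fitting subgroup.
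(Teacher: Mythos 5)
What you have written is, in substance, a correct proof of Theorem~\ref{th-1} --- the equivalence, for a $\B$-group $G$, between being an $\nc$-group and splitting over each member of its generalised Fitting series --- and it follows essentially the same route as the paper's own proof of that theorem: the identification $\Fast(G)=\Soc(G)$ for $\Phi$-free groups, a normal complement $L^{*}$ to $L=N\cap\Fast(G)$ inside $\Fast(G)$, a complement $H$ to $\Fast(G)$, and the complement $L^{*}(K\cap H)$ assembled by an order count and a Dedekind argument. Those steps are sound.

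The difficulty is that this is not the statement you were asked to prove. The Corollary asserts that for \emph{soluble} groups the classes $\B$ and $\nc$ coincide, and neither direction of that equivalence is actually established by your argument. Your ``forward implication'' shows that an $\nc$-group splits over its generalised Fitting series, not that it lies in $\B$; the latter is the easy containment $\nc\subseteq\B$, which holds because a complement $K$ to a normal subgroup $M$ of $G$ yields the complement $KN/N$ to $M/N$ in $G/N$, so no quotient of an $\nc$-group has a nontrivial Frattini chief factor. More seriously, in the converse direction you take the complement $H$ to $\Fast(G)$ to be ``supplied by hypothesis''; in the Corollary no such hypothesis is available, and supplying it is precisely where solubility must enter --- your argument never uses it. For a soluble $\Phi$-free group one has $\Lay(G)=1$, so $\Fast(G)=\F(G)$ is nilpotent with $\Phi(\F(G))\leq\Phi(G)=1$, hence abelian, and Lemma~\ref{collection}~\ref{satz7} complements it in $G$; an induction up the Fitting series (passing to the $\B$-group $G/\F(G)$ and intersecting the complement obtained there with a complement of $\F(G)$) then shows that a soluble $\B$-group splits over every $\F_i^{\ast}(G)=\F_i(G)$, after which Theorem~\ref{th-1} applies. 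Without this step the Corollary does not follow, and the gap cannot be waved away: $\Aut(A_6)$ is a $\B$-group that is not an $\nc$-group, so outside the soluble universe the splitting condition is genuinely stronger than membership in $\B$.
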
 

Theorem~\ref{th-1} also serves to answer in the affirmative a long standing question whether a subnormal subgroup of an $\nc$-group is also an $\nc$-group (see~\cite{christensen1}). 
\begin{corollary}\label{subnc}
The class $\nc$ is subnormally closed.
\end{corollary}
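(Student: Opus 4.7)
The plan is to leverage Theorem~\ref{th-1} together with part~\ref{thmB1} of Theorem~B to reduce the statement to a purely lattice-theoretic application of Dedekind's modular law. Since every $\nc$-group is a $\B$-group and $\B$ is subnormally closed, any subnormal subgroup of an $\nc$-group automatically lies in $\B$; hence, by Theorem~\ref{th-1}, what really needs to be checked is that such a subgroup splits over each member of its generalised Fitting series.

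First I would reduce to the case of a single normal subgroup. A subnormal subgroup $H \mathrel{\lhd\lhd} G$ sits in a chain $H = H_0 \mathrel{\lhd} H_1 \mathrel{\lhd} \cdots \mathrel{\lhd} H_n = G$, so by iteration it is enough to show that, whenever $G \in \nc$ and $H \mathrel{\lhd} G$, one has $H \in \nc$. Fix such an $H$. By Theorem~B\ref{thmB1} and the inclusion $\nc \subseteq \B$, we know $H \in \B$, so Theorem~\ref{th-1} says we only need to produce a complement in $H$ for each term $\Fast_i(H)$ of the generalised Fitting series of $H$.

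For this, observe that each $\Fast_i(H)$ is characteristic in $H$, and $H \mathrel{\lhd} G$, so $\Fast_i(H) \mathrel{\lhd} G$. Because $G$ is an $\nc$-group, there is $K \leq G$ with $G = \Fast_i(H)\,K$ and $\Fast_i(H) \cap K = 1$. Since $\Fast_i(H) \leq H$, Dedekind's modular law gives $H = \Fast_i(H)\,(H \cap K)$, and clearly $\Fast_i(H) \cap (H \cap K) = 1$; thus $H \cap K$ is the desired complement. Applying Theorem~\ref{th-1} in the opposite direction then yields $H \in \nc$.

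The main conceptual obstacle is really already dispatched by the earlier results: without Theorem~B\ref{thmB1} one could not transfer the $\B$-property from $G$ to $H$, and without Theorem~\ref{th-1} the splitting over the generalised Fitting series would not suffice to recover the $\nc$-property. Given both, the argument reduces to the Dedekind trick above, and the only care needed is to organise the induction on the subnormal length cleanly, making sure that at each step the intermediate group inherits the $\nc$-property from the one above it before passing to the next.
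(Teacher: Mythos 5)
Your proposal is correct and follows essentially the same route as the paper: reduce to the normal case along the subnormal chain, use the subnormal closure of $\B$ and Theorem~\ref{th-1} to reduce to splitting over each $\F_i^{\ast}(H)$, and obtain the complement by intersecting a complement in $G$ with $H$ via Dedekind's law. The only cosmetic difference is that you iterate along the subnormal series directly where the paper phrases the final step as an induction on $|G|$; both are fine.
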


\section{Notation and Preliminaries}
It is assumed that the reader is familiar with the notation presented 
in~\cite{classes} and~\cite{dh}. In order to make our paper reasonably self-contained, we collect in the following lemma some well-known facts about the Frattini subgroup of a group. Most of these will be used without further explicit reference. For full proofs the reader should consult Gasch{\"u}tz's early paper~\cite{gasch} or~\cite[pp.~30--32]{dh}.
\begin{lemma}\label{collection}
Let $G$ be a group.
\begin{enumerate}[label={\upshape(\roman*)}]
\item Let $N\mathrel{\lhd}G$. Then $N$ has a proper supplement in $G$ if and only if $N$ is not contained in $\Phi(G)$. And if $U$ is minimal in the set of supplements for $N$ in $G$ then $N \cap U = N \cap \Phi(U)$.
\item If $G = H\times K$ then $\Phi(G) = \Phi(H)\times\Phi(K)$.
\item If $N\mathrel{\lhd}G$ then $\Phi(G)N/N \leq \Phi(G/N)$, and if
$N \leq\Phi(G)$ then $\Phi(G)/N =\Phi(G/N)$.
\item If $L \leq G$ and $U \leq \Phi(L)$ with $U\mathrel{\lhd}G$ then $U \leq \Phi(G)$; thus if $L\mathrel{\lhd}G$ then $\Phi(L) \leq \Phi(G)$.
\item If $G$ is a $p$-group then $\Phi(G) = G^{\prime}G^p$, thus $G/\Phi(G)$ is elementary abelian. Therefore, if $G$ is nilpotent and $N\mathrel{\lhd}G$ then $\Phi(G/N)=\Phi(G)N/N$.
\item\label{satz4}$\Phi(G) \geq \Z(G) \cap G^{\prime}$.
\item\label{satz7} If $A$ is an abelian normal subgroup of $G$ and $A \cap \Phi(G)=1$ then $A$ is complemented in $G$.
\end{enumerate}
\end{lemma}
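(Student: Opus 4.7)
The plan is to organise all seven items around two equivalent descriptions of $\Phi(G)$: the intersection of the maximal subgroups of $G$, and the set of non-generators of $G$ (if $\langle\Phi(G),X\rangle=G$ then $\langle X\rangle=G$). Since the author explicitly defers the proofs to \cite{gasch} and \cite[pp.~30--32]{dh}, I would aim for a short, unified treatment in which each item is derived from whichever of these two viewpoints is most convenient.

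For \textup{(i)}, the non-generator characterisation immediately handles the forward direction, and in the converse a maximal subgroup $M$ avoiding $N$ gives the required proper supplement since $NM=G$. For the minimality clause, I would suppose $N\cap U\not\leq\Phi(U)$, pick a maximal $M<U$ failing to contain $N\cap U$, and invoke the modular law to obtain $NM=N(N\cap U)M=NU=G$, contradicting the minimality of $U$. Items \textup{(ii)} and \textup{(iii)} I would dispatch via the standard correspondences: every maximal subgroup of $H\times K$ contains one of the two factors, and maximal subgroups of $G/N$ correspond bijectively with maximal subgroups of $G$ containing $N$. For \textup{(iv)}, I would argue by contradiction, taking a proper supplement of $U$ in $G$ and intersecting with $L$; the modular law (using $U\leq L$) yields a proper supplement of $U$ in $L$, contradicting $U\leq\Phi(L)$. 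Item \textup{(v)} is the Burnside basis theorem (every maximal subgroup has index $p$ and hence contains both $G^{\prime}$ and $G^{p}$, and conversely $G/G^{\prime}G^{p}$ is elementary abelian), and the nilpotent quotient formula follows by combining this with the Sylow decomposition and \textup{(ii)}.

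For \textup{(vi)} I would argue by contradiction once more: if some maximal $M<G$ misses $Z(G)\cap G^{\prime}$, then $G = M(Z(G)\cap G^{\prime})$, and centrality of $Z(G)\cap G^{\prime}$ kills all mixed commutators, so $[G,G]=[M,M]\leq M$, giving $Z(G)\cap G^{\prime}\leq G^{\prime}\leq M$, a contradiction. The main obstacle is \textup{(vii)}, Gasch\"utz's complementation theorem, which is the only item that genuinely exploits the hypothesis on $A$. My plan is to choose $H\leq G$ minimal with $HA=G$. The abelianness of $A$ ensures that $A$ centralises $H\cap A$, while $H$ tautologically normalises $H\cap A$, so $H\cap A\mathrel{\lhd}HA=G$. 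Minimality of $H$ together with \textup{(i)} gives $H\cap A\leq\Phi(H)$, and \textup{(iv)} then upgrades this to $H\cap A\leq\Phi(G)$. The hypothesis $A\cap\Phi(G)=1$ finally forces $H\cap A=1$, so that $H$ is a complement. The delicate step here is verifying the normality of $H\cap A$ in $G$, which is exactly where the abelianness of $A$ is indispensable; without it the whole chain collapses.
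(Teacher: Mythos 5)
Your treatment of items (i) and (iii)--(vii) is correct, and it is essentially the classical Gasch{\"u}tz-style argument that the paper itself defers to (the paper offers no proof of this lemma, pointing the reader to Gasch{\"u}tz's paper and to Doerk--Hawkes instead). In particular your proof of (vii) --- take a minimal supplement $H$ of $A$, observe that $H\cap A\mathrel{\lhd}G$ because $A$ is abelian and normal, then use (i) to get $H\cap A\leq\Phi(H)$ and (iv) to push this into $\Phi(G)$ --- is exactly the standard route, and your identification of the normality of $H\cap A$ as the step where abelianness enters is accurate.

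The one genuine flaw is in (ii). The claim that every maximal subgroup of $H\times K$ contains one of the two factors is false: the diagonal subgroup of $C_2\times C_2$ is maximal and contains neither factor, and for a non-abelian simple group $S$ the diagonal of $S\times S$ is likewise maximal and contains neither copy of $S$. So the ``standard correspondence'' you invoke for direct products does not exist, and your proof of (ii) collapses as written. The statement is of course still true, but it needs a different argument: the inclusion $\Phi(H)\times\Phi(K)\leq\Phi(G)$ follows from item (iv), since $\Phi(H)$ and $\Phi(K)$ are normal subgroups of $G$ contained in the Frattini subgroups of the normal subgroups $H$ and $K$; and the reverse inclusion follows from the fact that an epimorphism maps the Frattini subgroup into the Frattini subgroup of the image (a consequence of the correspondence you correctly use in (iii)), applied to the two coordinate projections, together with the observation that any subgroup of $H\times K$ is contained in the direct product of its two projections. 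Note also that the order of deduction matters: (ii) should then be proved after (iii) and (iv), whereas your item (v) already uses (ii), so the dependencies remain consistent.
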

Let us also fix some further notation and terminology. We say that a group $G$ is \emph{quasi-simple} if $G$ is perfect and $G/\Z(G)$ is simple.
A subnormal quasi-simple subgroup of an arbitrary group $G$ is called a \emph{component} of $G$ and the \emph{layer}
of $G$, denoted by $\Lay(G)$, is the product of its components. It is known that if $A$ and $B$ are components of $G$, then either $A = B$ or $[A, B] = 1$ (see~\cite[Section 2.2]{classes}).

The \emph{generalised Fitting subgroup} of $G$, denoted by $\Fast(G)$,
is the product of $\Lay(G)$ and $\F(G)$, the Fitting subgroup of $G$. 
The associated generalised Fitting series of $G$ is defined by induction 
as $\F_0^{\ast} \coloneqq 1$, and 
$\F_{i+1}^{\ast}/\F_i^{\ast} \coloneqq \Fast(G/\F_i^{\ast})$ for $i>0$. 

For any group $G$, $\Fprime(G)$ denotes the normal subgroup of $G$ such that $\Fprime(G)/\Phi(G) = \Soc(G/\Phi(G))$. Note that $\F(G)$ is a subgroup of $\Fprime(G)$, and if $G$ is soluble then $\F(G) = \Fprime(G)$. Finally, if $H,K$ are subgroups of a group $G$ then we shall write $\llbracket H, K\rrbracket \coloneqq \left\{ L \leq G : H \leq L \leq K \right\}$.

The proof of Theorem~A depends on the following property of the generalised Fitting subgroup. 

\begin{lemma}\label{aux}
Suppose that $G$ is a group with $\Phi(G)=1$. Then $\Fprime(G) = \Fast(G)=\F(G)\times\Lay(G)$, and if $N \mathrel{\lhd} G$ with $N \leq \Fast(G)$ then $N=\F(N)\times \Lay(N)$. Also, if $V$ is another complement for $\F(N)$ in $N$ then $V=\Lay(N)$.
\end{lemma}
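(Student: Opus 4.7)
First I would extract the structural consequences of $\Phi(G)=1$ on the subgroups $\F(G)$ and $\Lay(G)$: since both are normal in $G$, Lemma~\ref{collection} forces $\Phi(\F(G)) = \Phi(\Lay(G)) = 1$. Applied to the perfect group $\Lay(G)$, Lemma~\ref{collection}\ref{satz4} yields $\Z(\Lay(G))\leq\Phi(\Lay(G))=1$, so every component is non-abelian simple and $\Lay(G) = E_1\times\cdots\times E_k$. Meanwhile $\F(G)$, being nilpotent with trivial Frattini subgroup, is abelian, and so $\F(G)\cap\Lay(G) \leq \Z(\Lay(G)) = 1$; hence $\Fast(G) = \F(G)\times\Lay(G)$. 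The identification $\Fprime(G) = \Fast(G)$ follows because $\Phi(G) = 1$ gives $\Fprime(G) = \Soc(G)$, and both inclusions with $\Fast(G)$ are routine: every minimal normal subgroup of $G$ is characteristically simple, hence either abelian and inside $\F(G)$, or a product of non-abelian simple components and inside $\Lay(G)$; conversely $\F(G)$ breaks as a direct sum of minimal normal subgroups via Lemma~\ref{collection}\ref{satz7}, while $\Lay(G)$ decomposes as the sum of the $G$-orbit products of the $E_i$.

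Next, for $N\mathrel{\lhd}G$ with $N\leq\Fast(G)$, I set $A \coloneqq N\cap\F(G)$ and $B \coloneqq N\cap\Lay(G)$. Writing an arbitrary $n\in N$ as $n = fl$ with $f\in\F(G)$ and $l\in\Lay(G)$, the identity $[\F(G),\Lay(G)] = 1$ gives $[n,x] = [l,x] \in N\cap\Lay(G) = B$ for every $x\in\Lay(G)$. Since $B$ is normal in $\Lay(G) = E_1\times\cdots\times E_k$, it is a subproduct of the $E_i$, so $\Lay(G)/B$ is again a direct product of non-abelian simple groups and in particular has trivial centre; this forces $l\in B$ and consequently $f\in A$, yielding $N = A\times B$. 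Distributing $\F$ and $\Lay$ across this direct decomposition then delivers $\F(N)=A$ and $\Lay(N)=B$, as required.

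For the final assertion, any complement $V$ to $\F(N)=A$ in $N=A\times B$ is isomorphic to $N/A\cong B$ and therefore perfect, while $A$ is central in $N$. Projection of $V$ onto $A$ along $B$ is then a group homomorphism from a perfect group to an abelian group, and so is trivial; hence $V\leq B$ and an order comparison gives $V = B = \Lay(N)$. I expect the main obstacle to be the splitting $N = A\times B$ in the middle step: a priori $N$ could sit diagonally inside $\F(G)\times\Lay(G)$, and ruling this out depends crucially on the centreless, semisimple nature of $\Lay(G)/B$, which is precisely what the structural work of the first step makes available.
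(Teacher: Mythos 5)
Your argument is correct, and for the first and third assertions it is essentially the paper's: components are non-abelian simple because $\Phi(Q)\leq\Phi(G)=1$ while Lemma~\ref{collection}~\ref{satz4} applied to a perfect group gives $\Z \leq \Phi$, and the uniqueness of the complement follows from perfectness of $V\cong N/\F(N)$ against the centrality of $\F(N)$ (the paper writes $N=\F(N)\times V$ and $V=N^{\prime}=\Lay(N)$; your projection argument is the same idea). The genuine difference is in the middle claim. The paper obtains $N=\F(N)\times\Lay(N)$ by applying the first part to $N$ itself, using $\Phi(N)\leq\Phi(G)=1$ together with the standard identity $\Fast(N)=N\cap\Fast(G)=N$, and then identifying $\Lay(N)=N\cap\Lay(G)$ via a cited lemma of Doerk--Hawkes. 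You instead rule out a diagonal embedding of $N$ in $\F(G)\times\Lay(G)$ by hand: $[n,x]=[l,x]\in N\cap\Lay(G)$ for all $x\in\Lay(G)$, and $\Lay(G)\big/(N\cap\Lay(G))$ is centreless because normal subgroups of a direct product of non-abelian simple groups are subproducts. This buys self-containedness --- you never need the functorial behaviour of $\Fast$ on normal subgroups --- at the cost of a longer computation. One justification to tighten: $\F(G)\cap\Lay(G)\leq\Z(\Lay(G))$ does not follow merely from $\F(G)$ being abelian; either invoke the standard fact $[\F(G),\Lay(G)]=1$, or argue as the paper does that $\F(G)\cap\Lay(G)\leq\F(\Lay(G))=1$ since $\Lay(G)$ is a direct product of non-abelian simple groups. (In your second step the commutation of $\F(G)$ with $\Lay(G)$ is free once $\Fast(G)=\F(G)\times\Lay(G)$ is in hand, so nothing external is needed there.)
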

\begin{proof}
First, we argue that $\Fast(G)=\F(G)\times\Lay(G)$ follows from $\F(G) \cap \Lay(G)=\F(\Lay(G))=1$ 
and we justify the latter. If $Q$ is a component of $G$, then $Q$ is subnormal in $G$ so 
$\Phi(Q) \leq \Phi(G)=1$ and thus $\Phi(Q)=1$. As $Q/\Z(Q)$ is non-abelian simple, it follows that 
$\Phi(Q) \leq \Z(Q)$. On the other hand, we have $\Z(Q) = Q^{\prime} \cap \Z(Q) \leq \Phi(Q)$ from 
Lemma~\ref{collection}~\ref{satz4} and the fact that $Q$ is perfect. Therefore $\Z(Q)=\Phi(Q)=1$ and so $Q$ is 
non-abelian simple. Thus $\Lay(G)$ is a direct product of non-abelian simple groups and so $\F(\Lay(G))=1$, as wanted. By 
\cite[Lemma 4.1]{dh}, $\Lay(G)$ is contained in $\Soc(G) = \Fprime(G)$. Therefore, $\Fprime(G) = \Fast(G)$.

Let $N$ be a normal subgroup of $G$ contained in $\Fast(G)$. Then $N = \Fast(N) = \F(N)\times\Lay(N)$ since $\Phi(N)=1$. Hence $\Lay(N)$ is a direct product of components of $G$. By \cite[Lemma 4.1]{dh}, we have that $\Lay(N) = N\cap\Lay(G)$. Thus $N=(N\cap\F(G))\times(N\cap\Lay(G))=\F(N)\times\Lay(N)$. Since $\F(N)$ is abelian, it follows that $\F(N)=\Z(N)$.

Suppose that $V$ is another complement for $\F(N)$ in $N$. Then $N=\F(N)V=\F(N) \times V$. Thus $V=N^{\prime}=\Lay(N)$, as desired.
\end{proof}
\section{Good normal subgroups}\label{good}

We begin by identifying certain normal subgroups $N$ of a group $G$ which satisfy the equation $\Phi(G/N)=\Phi(G)N/N$, even if $G$ 
as a whole does not have this property (an example that easily comes to mind is the Frobenius group of order 20). We refer to these subgroups as ``good" normal subgroups.

\begin{lemma}\label{frattavoid}
Let $N$ be a $\Phi$-free central subgroup of the group $G$. Then $N$ is a good normal subgroup of $G$. 
\end{lemma}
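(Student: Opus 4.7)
The inclusion $\Phi(G)N/N \leq \Phi(G/N)$ is automatic from Lemma~\ref{collection}(iii), so the plan is to establish the reverse containment, and I would argue by induction on $|N|$. The case $N = 1$ is vacuous and the case $N \leq \Phi(G)$ is immediate from the second part of Lemma~\ref{collection}(iii), which gives $\Phi(G/N) = \Phi(G)/N = \Phi(G)N/N$ in one stroke. Otherwise Lemma~\ref{collection}(i) furnishes a maximal subgroup $H$ of $G$ with $G = HN$, and the idea is to peel off the proper subgroup $N_1 \coloneqq H \cap N$ of $N$ and appeal to the inductive hypothesis on $G/N_1$.

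For the reduction to succeed, one needs $N_1$ to be normal, proper in $N$, and again $\Phi$-free and central in $G$. Centrality (hence normality) of $N_1$ is immediate since $N \leq \Z(G)$, properness follows from $H < G$, and the $\Phi$-freeness of $N_1$ is inherited from that of $N$ because a finite $\Phi$-free abelian group decomposes as the direct sum of its Sylow subgroups, each of which is elementary abelian, so every subgroup remains $\Phi$-free. The inductive hypothesis therefore applies to $N_1$ and yields $\Phi(G/N_1) = \Phi(G)N_1/N_1$.

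Working next in $\bar G \coloneqq G/N_1$, write $\bar N \coloneqq N/N_1$ and $\bar H \coloneqq H/N_1$. By the second isomorphism theorem $\bar N \cong G/H$ is of prime order, $\bar N \cap \bar H = 1$, and $\bar N$ is central in $\bar G$; hence $\bar G = \bar N \times \bar H$ is an internal direct product. Lemma~\ref{collection}(ii) then yields $\Phi(\bar G) = \Phi(\bar N) \times \Phi(\bar H) = \Phi(\bar H)$, so $\Phi(\bar G) \cap \bar N = 1$ and the canonical identification $\bar G/\bar N \cong \bar H$ makes $\Phi(\bar G)\bar N/\bar N$ and $\Phi(\bar G/\bar N)$ coincide. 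Translating this equality through the third isomorphism theorem $\bar G/\bar N \cong G/N$ and substituting the inductive value of $\Phi(G/N_1)$ yields
\[ \Phi(G/N) = \Phi(G)N_1 \cdot N/N = \Phi(G)N/N, \]
which completes the induction.

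The only delicate point is the verification that $N_1$ remains $\Phi$-free, without which the induction cannot restart; this is handled by the elementary structure of finite $\Phi$-free abelian groups noted above. Apart from that, the argument is essentially the extraction of a central prime-order direct factor followed by routine bookkeeping via the third isomorphism theorem, so I do not anticipate any genuine obstacle.
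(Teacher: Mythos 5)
Your argument is correct, but it reaches the conclusion by a genuinely different route from the paper's. The paper works in a single step: since a $\Phi$-free abelian group is complemented, it writes $N = (N \cap \Phi(G)) \times E$, uses Lemma~\ref{collection}~\ref{satz7} to complement the central subgroup $E$ (which avoids $\Phi(G)$) by some $L$, so that $G = E \times L$ with $\Phi(G) = \Phi(L)$ and $G/N \cong L/(N \cap \Phi(L))$, and then the equality $\Phi(G/N) = \Phi(G)N/N$ drops out of parts (ii) and (iii) of Lemma~\ref{collection}. You instead induct on $|N|$, peeling off one maximal subgroup at a time so that in $G/N_1$ the image of $N$ becomes a prime-order central direct factor, and then invoke the same two facts about $\Phi$ of direct products and of quotients by subgroups of $\Phi(G)$. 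The two proofs rest on the same structural pillars, but yours trades the appeal to the complementation theorem for $\Phi$-free abelian groups and to Lemma~\ref{collection}~\ref{satz7} for the more elementary observation that some maximal subgroup omits $N$ once $N \not\leq \Phi(G)$, at the cost of the extra bookkeeping through $G/N_1$; the paper's version is shorter because it splits off all of $E$ at once. One point you should make explicit: for $\bar N \cong G/H$ to be a group of prime order you need $H$ normal in $G$, which does hold here because the central subgroup $N$ normalises $H$ and $G = HN$, whence $G/H$ is simple and abelian. With that remark added, your induction is complete and correct.
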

\begin{proof}
Let $E$ be a not necessarily proper or nontrivial complement to $N \cap \Phi(G)$ in $N$ so that 
$N = \left(N \cap \Phi(G)\right) \times E$. The existence of $E$ is guaranteed since $N$, being central 
and $\Phi$-free, is a direct product of elementary abelian subgroups and such a group is complemented~\cite{hall}. 
Then $E \cap \Phi(G) = 1$, hence $E$ has a complement in $G$, say $L$. So $G = E \times L$ and $\Phi(G) = \Phi(L)$. Since $G/N$ is isomorphic to $L\big/N \cap \Phi(G)$, it follows that 
\begin{equation}
\Phi(G/N) \cong \Phi \left( L\big/N \cap \Phi(L) \right) = \Phi(L)/N \cap \Phi(L) \cong \Phi(L)N\big/N = \Phi(G)N\big/N.
\end{equation}
Consequently, $\Phi(G/N) = \Phi(G)N\big/N$ and the claim follows.
\end{proof}

In particular, if $N \leq \Soc(Z(G))$ then $\Phi(G/N)=\Phi(G)N/N$. In fact, we can use this information to get more good normal subgroups.
\begin{corollary}\label{soclecentre}
Let $G$ be a group and define by induction $S_0 \coloneqq 1$, and 
$S_{i+1}/S_i \coloneqq \Soc(Z(G/S_i))$ for $i>0$. 
Then every normal subgroup of $G$ lying in an interval 
$\llbracket S_j , S_{j+1} \rrbracket$, $j \geq 0$ is good.
\end{corollary}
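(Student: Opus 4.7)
The plan is to proceed by induction on $j$, combining Lemma~\ref{frattavoid} with a simple transitivity-of-goodness principle for quotients. First I would record the following observation: if $M \leq N$ are normal subgroups of $G$ with $M$ good in $G$ and $N/M$ good in $G/M$, then $N$ is good in $G$. Indeed, identifying $(G/M)/(N/M)$ with $G/N$ and substituting $\Phi(G/M) = \Phi(G)M/M$ in the equality $\Phi(G/N) = \Phi(G/M)(N/M)/(N/M)$ yields $\Phi(G/N) = \Phi(G)N/N$, using $M \leq N$.

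Next I would verify the structural fact that makes Lemma~\ref{frattavoid} applicable at each stage. By construction, $S_{j+1}/S_j = \Soc(Z(G/S_j))$ is contained in the centre of $G/S_j$ and is a direct product of subgroups of prime order, so it is elementary abelian in the sense of being a direct sum of cyclic groups of prime order; in particular it is $\Phi$-free, and so is every one of its subgroups. Therefore, for any normal subgroup $N$ of $G$ with $S_j \leq N \leq S_{j+1}$, the quotient $N/S_j$ is a $\Phi$-free central subgroup of $G/S_j$ and hence, by Lemma~\ref{frattavoid}, is good in $G/S_j$.

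I would then prove by induction on $j$ that each $S_j$ is good in $G$. The base case $S_0 = 1$ is vacuous. For the step, the inductive hypothesis that $S_j$ is good in $G$, together with the goodness of $S_{j+1}/S_j$ in $G/S_j$ established above, gives via the transitivity principle that $S_{j+1}$ is good in $G$. Finally, for an arbitrary normal subgroup $N$ with $S_j \leq N \leq S_{j+1}$, the same transitivity principle applied to the pair $(S_j, N)$ yields that $N$ is good in $G$, which is the desired conclusion.

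I do not anticipate any real obstacle; the only point requiring a little care is the bookkeeping when translating between the Frattini quotients of $G$, $G/S_j$, and $G/N$, but that is purely formal and is exactly what the transitivity principle handles.
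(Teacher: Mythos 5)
Your proposal is correct and follows essentially the same route as the paper: induct on $j$, use Lemma~\ref{frattavoid} to see that any normal subgroup of $G/S_j$ contained in $\Soc(Z(G/S_j))$ is good there, and transfer the conclusion back to $G$ via the goodness of $S_j$. The paper carries out the same "transitivity of goodness" computation inline rather than isolating it as a separate principle, but the content is identical.
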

\begin{proof}
We induct on $j$. For $j=0$ the claim is true from Lemma~\ref{frattavoid}. Now, let $H \mathrel{\lhd}G$ with 
$H \in \llbracket S_j , S_{j+1} \rrbracket$, $j>0$: Write $\overline{G} = G/S_j$, and use the bar convention. Then 
$\overline{H}$ is a subgroup of $\Soc(Z(\overline{G}))$, so by Lemma~\ref{frattavoid} we have
$\Phi(\overline{G}/\overline{H})=\Phi(\overline{G})\overline{H}\big/\overline{H}$. Note that 
$\Phi(\overline{G}/\overline{H}) \cong \Phi(G/H)$, so it suffices to show that 
$\Phi(\overline{G})\overline{H}\big/\overline{H} \cong \Phi(G)H/H$ to complete the proof. By induction 
$\Phi(\overline{G})=\Phi(G)S_j/S_j$, thus $\Phi(\overline{G})\overline{H}=\left(\Phi(G)S_j/S_j\right)H/S_j=\Phi(G)H/S_j$. Then
clearly $\Phi(\overline{G})\overline{H}\big/\overline{H} \cong \Phi(G)H/H$ and the induction is complete.
\end{proof}
Note that if $M,N$ are good normal subgroups of $G$, then $MN$ is not necessarily good. 
\begin{example}
Let $E$ be elementary abelian of order $5^2$ and assume that $C = \langle c \rangle$ is cyclic of order 4, acting on $E$ 
by $x^c = x^2$, $x \in E$. Let $G = E \rtimes C$, the semidirect product, 
and write $E = M \times N$, where each of $M$ and $N$ has order 5. Note that $M$ and $N$ are normal
in $G$. Also $\Phi(G) = 1$. To see this, note that $MC$ and $NC$ are maximal in $G$, so $\Phi(G) \leq MC \cap NC = C$. 
Then $\Phi(G) \cap E =1$ so $\Phi(G)$ centralises $E$. Since $E$ is self-centralising in $G$, this forces $\Phi(G) = 1$.
Now $K \mathrel{\lhd} G$ is good if and only if $\Phi(G/K)$ is trivial. Then $M$ and $N$ are good 
because $G/M$ and $G/N$ are each isomorphic to the Frobenius group of order 20 and so have trivial Frattini subgroups. But $MN = E$ is a normal subgroup of $G$ which is not good. 
\end{example}
\begin{remark}
There is an obvious exception to the general rule that if $M,N$ are normal good subgroups of a group $G$, then the product $MN$ is not good. 
If $M,N$ are normal in $G$, both good, and both lie in the same interval $\llbracket S_j , S_{j+1} \rrbracket$ for some index $j$, where $S_j, S_{j+1}$ are successive terms of the $S$-series of Corollary~\ref{soclecentre}, then their product is normal in $G$ and lies in $\llbracket S_j , S_{j+1} \rrbracket$. Therefore, in this particular case, $MN$ is good.
\end{remark}
\section{Proofs of the main results}

\begin{proof}[Proof of Theorem~A]
Suppose that $G/N$ is $\Phi$-free for all normal subgroups $N$ of $G$ containing $\Fast(G)$. Assume, arguing by contradiction, that the result is not true. Then $G$ has a normal subgroup $E$ such that $G/E$ is not $\Phi$-free. Let us choose $E$ of minimal order. If $1<N<E$ with $N\mathrel{\lhd} G$, we will show that $G/N$ satisfies the hypotheses of the theorem. First, $\Phi(G/N)=1$ by the choice of $E$. Next, if $\Fast(G/N) \leq M/N$ with $M \mathrel{\lhd} G$ then $\Fast(G)\leq M$, so by hypothesis $\Phi(G/M)=1$, thus $\Phi(G/N \big/ M/N)=1$. The minimal choice of $G$ implies that $\Phi\left(G/N \big/ E/N\right)=1$, so $\Phi(G/E)=1$, contrary to supposition. We may thus assume that $E$ is minimal normal in $G$. Write $U/E =\Phi(G/E)$. Any maximal subgroup of $G$ which contains $\Fast(G)$ contains $E$ and thus contains $U$ and so contains $U\Fast(G)$. Therefore $U\Fast(G)/U$ is a subgroup of $\Phi(G/\Fast(G))=1$, thus $U \leq \Fast(G)$. By Lemma~\ref{aux} we deduce that $U =\F(U) \times \Lay(U)$. 

Assume that $E$ is abelian. Then $U$ is soluble, $\Lay(U)=1$ and $U=\F(U)$ is nilpotent. Also $U \mathrel{\lhd} G$ so $\Phi(U) \leq \Phi(G)=1$, hence $\Phi(U)=1$ and thus $U$ is abelian. Then Lemma~\ref{collection}~\ref{satz7} guarantees the existence of a complement for $U$ in $G$. Call this complement $X$. Then $XE/E$ is a complement for $U/E$ in $G/E$ and since $U/E =\Phi(G/E)$ we have $G = XE$. By Dedekind's lemma $U=U \cap XE=(U \cap X)E=E$, thus $\Phi(G/E)=1$, contrary to our assumption. 

Assume that $E$ is non-abelian. Then $E\leq\Lay(U)$. In fact, $E$ must be the whole of $\Lay(U)$ for if not then $U/\F(U)E$, which is a homomorphic image of the nilpotent $U/E=\Phi(G/E)$ thus nilpotent itself, would also be a direct product of non-abelian simple groups. 
Therefore $U=\F(U)\times E$. By Lemma~\ref{collection}~\ref{satz7}, and since $\F(U)$ is
abelian, there is a complement $X$ for $\F(U)$ in $G$. Let $V=U\cap X$. Then Dedekind's lemma yields $U=U\cap X\F(U)=V\F(U)$ with $V\cap \F(U)=X\cap \F(U)=1$. From Lemma~\ref{aux} we know that $V=E$ thus $X\geq E$. Therefore,
$X/E$ complements $U/E=\Phi(G/E)$ in $G/E$ thus $X=G$ and so $\F(U)=1$. Finally, this implies that $U=E$ so $\Phi(G/E)$ is trivial. This final contradiction completes the proof of the theorem.
\end{proof}

Let us now demonstrate how Theorem~A can be used effectively to establish the generalisation of Doerk's theorem asserted in the introduction.

\begin{proof}[Proof of Theorem~B]
\ref{thmB1} First, we prove that $\B$ is a formation. Let $G$ be a group and suppose that $G/M, G/N \in \B$ for some normal subgroups $M,N$ of $G$ with $M \cap N =1$. Note that $\Phi(G)$ is 
contained in the full preimage of $\Phi(G/M)$, which is $M$, and in 
the full preimage of $\Phi(G/N)$, which is $N$. Thus 
$\Phi(G) \leq M \cap N =1$, and so $\Phi(G)=1$.
Now, let $G$ have least possible order among groups which 
have two trivially intersecting normal subgroups, each defining a 
quotient which is in $\B$, but the whole group is not in $\B$. We 
assume, as we may, that $M,N$ are minimal subgroups of $G$ 
with respect to $G/M, G/N \in \B$.
We claim that $M,N$ are both minimal normal subgroups of $G$. If not, then 
$1<U<M$ with $U \mathrel{\lhd} G$, for instance. By the minimality 
of $M$, $G/U \notin \B$. Consider the group $\overline{G}=G/U$ 
and its normal subgroups $\overline{M}=M/U$, $\overline{N}=UN/U$. 
Notice that $\overline{M} \cap \overline{N} = 1$ and both 
$\overline{G}\big/\overline{M}$, $\overline{G}/\overline{N}$ are 
in $\B$ since $G/M$, $G/N$ are in $\B$. However, this violates the 
minimality of $G$ since $\left|\overline{G}\right|<|G|$ and 
$\overline{M}, \overline{N}$ satisfy the initial hypotheses.
Then $\Phi(G)=1$ implies $\Soc(G)=\Fprime(G)=\Fast(G)$, thus $M \leq \Soc(G)=\Fast(G)$. Since 
$G/M \in \B$, we have $\Phi(G/K)=1$ for all $M \leq K$, thus 
$\Phi(G/K)=1$ for all $\Fast(G) \leq K$. Now Theorem~A
yields $G \in \B$, a contradiction. Therefore, there exists no such $G$. 

Next, we show that $\B$ is subnormally closed. For that it clearly
suffices to prove that $\B$ is closed under taking normal subgroups.
A straightforward induction on the subnormal defect then yields that
$\B$ is subnormally closed. Let $G$ have least 
possible order among groups which are in $\B$, but at least 
one of their normal subgroups is not. Let $N$ be one of them, note 
that $\Phi(N)=1$, and consider $\Fast(N)$, which is normal in $G$ 
and nontrivial. Then $G/\Fast(N)$ is in $\B$, since $G$ is in $\B$, 
but $N/\Fast(N)$ is not in $\B$ (if it were then Theorem~A
would yield $N\in \B$). This, however, contradicts the minimality 
of $G$, so there is no such $G$ to begin with.

Finally, we demonstrate that $\B$ is totally non-saturated. Let $G$
be a group and consider the $\B$-residual $T$ of
$G$. Let $T/H$ be a minimal normal subgroup of $G/H$, where
$H < T$ and $H \mathrel{\lhd} G$. If $T/H$ is not contained in
$\Phi(G/H)$ then $T/H \cap \Phi(G/H) = 1$ by minimality of $T/H$.
But $\Phi(G/H/(T/H)$ is trivial since $\Phi(G/T)$ is trivial,
so $\Phi(G/H)$ is contained in $T/H$, the full preimage of
$\Phi(G/H/T/H)$, and thus $\Phi(G/H) = 1$. Therefore,
$T/H \leq \Soc(G/H)=\Fast(G/H)$ and so $G/H/\Fast(G/H) \in \B$.
Then, by Theorem~A, we have that $G/H \in \B$, which
contradicts the fact that $T$ is the $\B$-residual of $G$, so $T/H$
is contained in $\Phi(G/H)$, as wanted.

\ref{thmB2} Let $\FF$ be the class of groups $G$ such that $\Phi (G/N) = \Phi(G)N/N$ for all $N \mathrel{\lhd} G$. Note that every $\Phi$-free group in $\FF$ is a $\B$-group. Hence $\FF$ is contained in $E_{\Phi}\B$. 

Assume that $X \in E_{\Phi}\B$. Then $X/\Phi(X)$ belongs to $\B$. Let $Z$ be a normal subgroup of $X$. Then the full preimages of $\Phi(X/Z\Phi(X))$ and $\Phi(G/Z)$ 
are equal since the intersection of those maximal subgroups of $X$ that contain $Z$ is precisely the intersection of the maximal subgroups that contain $Z\Phi(X)$. Since  $X/Z\Phi(X) \in \B$, it follows that $Z\Phi(X)/Z = \Phi(G/Z)$ and $X \in \FF$. Therefore $\FF = E_{\Phi}\B$. In particular, $\FF$ is contained in $\mathfrak{N}\B$. Suppose that $\mathfrak{N}\B$ is not contained in $\FF$ and let $G \in \mathfrak{N}\B \setminus \FF$ a group of minimal order. Then $G/\F(G) \in \B$. If $G$ were $\Phi$-free, then $\Soc(G)=\Fprime(G)=\Fast(G)$ and so $G/\Fast(G)$ would belong to $\B$. By Theorem~A, $G$ would be a $\B$-group, contrary to our supposition. Therefore, $\Phi(G) \neq 1$. The minimal choice of $G$ implies that $G/\Phi(G) \in \FF$ and so $G/\Phi(G) \in \B$. This contradiction shows that $\mathfrak{N}\B$ is contained in $\FF$, and so $\FF = E_{\Phi}\B = \mathfrak{N}\B$. 

\ref{thmB3} According to \cite[IV, Example 3.14 b)]{dh}, $\FF$ is a saturated formation which is locally defined by the formation function $f$ given by $f(p) \B$ for all primes $p$. Suppose that $\mathfrak{H}$ is a saturated formation containing $\B$. Then $\FF = E_{\Phi}\B \subseteq E_{\Phi}\mathfrak{H} = \mathfrak{H}$. Consequently, $\FF$ is the smallest saturated formation containing $\B$. By \cite[IV, Proposition 3.4]{dh}, $\FF$ is closed under taking subnormal subgroups. 
\end{proof}

\begin{proof}[Proof of Theorem~\ref{th-1}]
If $G$ is an $\nc$-group,  then certainly $G$ splits over each member of its generalised Fitting series, since each said member is normal.
For the other direction, we induct on the generalised Fitting length $k$ of the $\Phi$-free group $G$, that is, the smallest integer $k$ such that $\F_k^{\ast} = G$. If $k=1$ then $G = \Fast(G) = \F(G)\times\Lay(G)$ and if $N$ is a normal subgroup of $G$, it follows that $N = \Fast(N) = \F(N)\times\Lay(N)$ by Lemma~\ref{aux}. Since $\Lay(N)$ is a normal subgroup of $G$ which is a direct product of non-abelian simple groups, it follows that $\Lay(N)$ is complemented in $\Lay(G)$. Moreover $\F(N)$ is a normal subgroup of $G$ which is complemented in $\F(G)$ by a normal subgroup of $G$ since $\F(G)$ is a direct product of semisimple modules. Therefore $G$ splits over $N$.

Assume now that $k > 1$. Let $G \in \B$ have length $k$, and suppose that $G$ splits over each member of its generalised Fitting series. Let $H$ be a complement for $\Fast(G)$ in $G$. Then $H \in \B$ has generalised Fitting length $k-1$, since $G/\Fast(G) \cong H$, and is therefore an $\nc$-group by the inductive hypothesis. Let 
$N \mathrel{\lhd} G$ and write $X=\Fast(G)N$. Then $X \mathrel{\lhd} G$ and $X\cap H$ has a complement in $H$, say $C$. Let $V$ be a 
complement for $\Fast(N)=\Fast(G) \cap N$ in $\Fast(G)$. By our earlier remark $V \mathrel{\lhd} G$ and we claim that $VC$ is a complement 
for $N$ in $G$. First, $X=NV\Fast(N)=NV$, thus $NVC=XC=(X \cap H)C\Fast(G)=H\Fast(G)=G$. On the other hand 
$|C|=|H:X \cap H|=|XH:X|=|G:X|$ and $|V|=|\Fast(G):\Fast(G)\cap N|=|X:N|$. Thus $|C||V|=|G:N|$ and $C \cap V \leq H \cap \Fast(G)=1$. We 
conclude that $|VC|=|G:N|$, thus $VC$ is a complement for $N$ in $G$, as claimed. The induction is now complete.
\end{proof}

\begin{proof}[Proof of Corollary~\ref{subnc}]
First, we show that if $G \in \nc$ and $N \mathrel{\lhd}G$ then $N \in \nc$. 
Since $G \in \nc \subseteq \B$ and $B$ is subnormally closed it follows 
that $N \in B$ and so by Theorem~\ref{th-1} $N \in \nc$ if and only if $N$ splits over $\F_i^{\ast}(N)$ 
for all $i$. But $\F_i^{\ast}(N)$ is characteristic in $N$ thus normal in $G$, so if $C$ is a complement to $\F_i^{\ast}(N)$ in $G$ then $N \cap C$ is a complement to $\F_i^{\ast}(N)$ in $N$ by a standard application of Dedekind's lemma. Since $N$ splits over $\F_i^{\ast}(N)$ for all $i$ and $N \in \B$ we deduce that $N \in \nc$.

For the general case of subnormal subgroups of an $\nc$-group $G$ we argue 
by induction on its order, the base case being vacuously true. Let $G \in \nc$ 
and $H \mathrel{\lhd}\mathrel{\lhd}G$ and proper. Then 
$H \mathrel{\lhd}\mathrel{\lhd}N$ for some proper 
normal subgroup $N$ of $G$ which, by the previous paragraph, is an $\nc$-group. 
Applying the inductive hypothesis to $N < G$ we conclude that $H \in \nc$. Since 
$H$ was arbitrary our induction is complete.
\end{proof}

\begin{ack}
Part of this work has appeared in the first author's Ph.D. thesis. During that time Martin I. Isaacs contributed a proof of the implication \ref{newdoerk3} $\Rightarrow$ \ref{newdoerk1} in Corollary~\ref{newdoerk}.
Both authors are grateful to him for this contribution.
\end{ack}
\bibliographystyle{amsalpha}
\bibliography{Bibliography}

\providecommand{\bysame}{\leavevmode\hbox to3em{\hrulefill}\thinspace}
\providecommand{\MR}{\relax\ifhmode\unskip\space\fi MR }
\providecommand{\MRhref}[2]{%
  \href{http://www.ams.org/mathscinet-getitem?mr=#1}{#2}
}
\providecommand{\href}[2]{#2}
\begin{thebibliography}{BBJM95}

\bibitem[BBE91]{prefrattini2}
A.~Ballester-Bolinches and L.~M. Ezquerro, \emph{On maximal subgroups of finite
  groups}, Comm. Algebra \textbf{19} (1991), no.~8, 2373--2394.

\bibitem[BBE06]{classes}
\bysame, \emph{Classes of finite groups}, Mathematics and Its Applications
  (Springer), vol. 584, Springer, Dordrecht, 2006.

\bibitem[BBJM95]{bjuan}
A.~Ballester-Bolinches and C.~Juan-Mart{\'\i}nez, \emph{Locally constructed
  formations of finite groups}, Groups '93 Galway/St. Andrew. Volume 211 of
  London Math. Soc. Lecture Note Ser. Cambridge Univ. Press \textbf{1} (1995),
  32--36.

\bibitem[Bec76]{bechtell-1}
H.~Bechtell, \emph{The prefrattini residual}, Proc. Amer. Math. Soc.
  \textbf{55} (1976), no.~2, 267--270.

\bibitem[Bec87]{locom}
\bysame, \emph{Locally complemented formations}, J. Algebra \textbf{106}
  (1987), 413--429.

\bibitem[Chr64]{christensen1}
C.~Christensen, \emph{Complementation in groups}, Math. Z. \textbf{84} (1964),
  52--69.

\bibitem[DH92]{dh}
K.~Doerk and T.~Hawkes, \emph{Finite soluble groups}, de Gruyter Expositions in
  Mathematics, vol.~4, Walter de Gruyter \& Co., Berlin, 1992.

\bibitem[Doe71]{doerkh}
K.~Doerk, \emph{{\"U}ber homomorphe und formationen endliche aufl\"osbare
  {G}ruppen}, Habilitationschrift. Mainz (1971).

\bibitem[Doe94]{doerk}
K.~Doerk, \emph{{\"U}ber endliche aufl\"osbare {G}ruppen, die sich gegen\"uber
  der {F}rattinigruppe wie nilpotente {G}ruppen verhalten}, J. Algebra
  \textbf{167} (1994), no.~2, 533--537.

\bibitem[Gas53]{gasch}
W.~Gasch{\"u}tz, \emph{{\"U}ber die {$\Phi$}-{U}ntergruppe endlicher
  {G}ruppen}, Math. Z. \textbf{58} (1953), 160--170.

\bibitem[{Hal}37]{hall}
P.~{Hall}, \emph{{Complemented groups}}, {J. Lond. Math. Soc.} \textbf{12}
  (1937), 201--204.

\bibitem[Haw75]{hawkes}
T.~O. Hawkes, \emph{Two applications of twisted wreath products to finite
  solvable groups}, Trans. Amer. Math. Soc. \textbf{21} (1975), 325--335.

\end{thebibliography}

\end{document}